\theoremstyle{plain}
\newtheorem{theorem}{Theorem}
\newtheorem{lemma}[theorem]{Lemma}
\newtheorem{corr}[theorem]{Corollary}
\theoremstyle{definition}
\title{A BK inequality for random matchings}
\author{Andr\'as M\'esz\'aros}
\address{Central European University, Budapest\and\newline\indent Alfr\'ed R\'enyi Institute of Mathematics, Budapest}
\email{Meszaros\_Andras@phd.ceu.edu}
\begin{document}

\maketitle

 \begin{abstract}
Let $G=(S,T,E)$ be a bipartite graph.   For a matching $M$ of $G$, let $V(M)$ be the set of vertices covered by $M$, and let $B(M)$ be the symmetric difference of $V(M)$ and $S$. We prove that if $M$ is a uniform random matching of~$G$, then $B(M)$ satisfies the BK inequality for increasing events.   

 \end{abstract}




\section{Introduction}

Let $V$ be a finite set. We will consider random subsets of $V$. Let $\mathcal{A}$ and $\mathcal{B}$ be upward closed subsets of $2^V$, in other words, let $\mathcal{A}$ and $\mathcal{B}$ be increasing events. Let $\mathcal{A}\square\mathcal{B}$ be the event that $\mathcal{A}$ and $\mathcal{B}$ both occur disjointly, more formally, we define

\[\mathcal{A}\square \mathcal{B}=\{A\cup B|\quad A\in\mathcal{A},B\in \mathcal{B},A\cap B=\emptyset\}.\]

Let $G=(S,T,E)$ be a bipartite graph, and let $V=S\cup T$.  Let $\mathcal{M}$ be the set of matchings in $G$. For a matching $M\in \mathcal{M}$, let $V(M)$ be the set of vertices covered by $M$, and let \[B(M)=V(M)\Delta S,\] where $\Delta$ denotes the symmetric difference. Note that we have $|B(M)|=|S|$ for any matching~$M$. 

Our main result is the following.

\begin{theorem}\label{thm1}
Let $M$ be a uniform random element of $\mathcal{M}$. Then $B(M)$ satisfies the BK inequality for increasing events, that is, if $\mathcal{A}$ and $\mathcal{B}$ are upward closed subsets of~$2^V$, then
\[\mathbb{P}(B(M)\in \mathcal{A}\square \mathcal{B} )\le \mathbb{P}(B(M)\in\mathcal{A})\mathbb{P}(B(M)\in\mathcal{B}).\]
\end{theorem}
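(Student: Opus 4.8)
The plan is to recognise the law of $B(M)$ as a strongly Rayleigh measure and then to run a van den Berg--Kesten-style induction, feeding in negative dependence where the classical product-measure argument would use independence.

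\emph{Step 1 (the measure is strongly Rayleigh).} For $z=(z_v)_{v\in V}$ set
\[
P(z)=\sum_{M\in\mathcal M}\ \prod_{s\in S\setminus V(M)}z_s\ \prod_{t\in T\cap V(M)}z_t ,
\]
so that $\mathbb E\prod_{v\in B(M)}z_v=P(z)/|\mathcal M|$; hence $P$ is a nonnegative, homogeneous, multiaffine polynomial of degree $|S|$ whose normalisation is the law $\nu$ of $B(M)$ (supported on $\binom{V}{|S|}$). I would prove that $P$ is stable, i.e.\ nonvanishing whenever every $z_v$ lies in the open upper half-plane. This can be done by a Heilmann--Lieb-type induction from the deletion recursion $P_G=z_{s_0}P_{G-s_0}+\sum_{t\in N(s_0)}z_tP_{G-s_0-t}$ (an interlacing/Hurwitz estimate for the ratio of the two summands), or, equivalently, from the identity $P(z)=\bigl(\prod_{t\in T}z_t\bigr)\,\mu_G\bigl((z_s)_{s\in S},(z_t^{-1})_{t\in T}\bigr)$, where $\mu_G(x)=\sum_M\prod_{v\notin V(M)}x_v$ is the multivariate monomer--dimer polynomial, together with the bipartite refinement of the Heilmann--Lieb theorem stating that $\mu_G$ does not vanish when $\mathrm{Im}\,x_s>0$ for $s\in S$ and $\mathrm{Im}\,x_t<0$ for $t\in T$ (note $\mathrm{Im}\,z_t>0$ forces $\mathrm{Im}\,z_t^{-1}<0$). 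Stability of $P$ makes $\nu$ a homogeneous strongly Rayleigh measure; in particular $\nu$ is negatively associated, and its restriction and conditioning in any single coordinate are again strongly Rayleigh.

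\emph{Step 2 (induction).} I would prove the BK inequality for $\nu$ by induction on $|V|$; the base case $E=\varnothing$, where $\nu=\delta_S$, is immediate because an increasing event that contains a subset of $S$ contains $S$. For the inductive step, pick a vertex $v$ that is an endpoint of an edge and condition on the coordinate $v$: since $P|_{z_v=0}$ and $\partial_{z_v}P$ are again stable (and nonzero, by the choice of $v$), the conditional laws $\nu(\cdot\mid v\notin\cdot)$ and $\nu(\cdot\mid v\in\cdot)$ are homogeneous strongly Rayleigh measures on $2^{V\setminus v}$, so the induction hypothesis gives BK for them. With $\mathcal A^-=\{Y\subseteq V\setminus v:Y\in\mathcal A\}$ and $\mathcal A^+=\{Y:Y\cup\{v\}\in\mathcal A\}$ (both increasing, $\mathcal A^-\subseteq\mathcal A^+$), one decomposes $\nu(\mathcal A\square\mathcal B)$ using $(\mathcal A\square\mathcal B)^-=\mathcal A^-\square\mathcal B^-$ and $(\mathcal A\square\mathcal B)^+\subseteq(\mathcal A^+\square\mathcal B^-)\cup(\mathcal A^-\square\mathcal B^+)$, applies the inductive bound on $2^{V\setminus v}$, and reassembles. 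The one ingredient not present in the classical argument is a replacement for the product property used there to identify the two conditional laws; here they differ, and one compensates using the negative-dependence consequences of Step 1 (for instance the stochastic domination $\nu(\cdot\mid v\in\cdot)\preceq\nu(\cdot\mid v\notin\cdot)$) and a sharper handling of the term coming from $(\mathcal A\square\mathcal B)^+$.

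The main obstacle is exactly this last point in Step 2: $\nu$ is not a product measure, so "each conditional law satisfies BK" does not by itself power the van den Berg--Kesten recursion, and making the closing numerical inequality go through is where the strong Rayleigh / stability structure of the matching measure has to be used. (If a BK inequality for homogeneous strongly Rayleigh measures is available off the shelf, Step 2 can instead be cited; otherwise it must be carried out by hand.)
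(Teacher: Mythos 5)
Your Step 1 is correct: the generating polynomial $P$ is indeed multiaffine, homogeneous of degree $|S|$, and the identity $P(z)=\bigl(\prod_{t\in T}z_t\bigr)\,\mu_G\bigl((z_s)_{s\in S},(z_t^{-1})_{t\in T}\bigr)$ together with the rotated (bipartite) form of the Heilmann--Lieb theorem does show that $P$ is stable, so the law of $B(M)$ is a homogeneous strongly Rayleigh measure. This is a genuinely useful observation --- it immediately yields negative association, i.e.\ Corollary \ref{corna} --- but it does not prove Theorem \ref{thm1}, and Step 2 is where the proposal has a real gap. The BK inequality for increasing events is strictly stronger than negative association, and there is no theorem (off the shelf or otherwise) asserting that homogeneous strongly Rayleigh measures satisfy it; this is precisely the open question left by van den Berg and Jonasson \cite{bj} after they treated the uniform $k$-subset case, and the present paper exists because the matching measure is \emph{not} covered by any general negative-dependence principle.

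The induction you sketch does not close. In the van den Berg--Kesten argument the step that reassembles
\[\nu(\mathcal A\square\mathcal B)\le \nu(\{v\notin\cdot\})\,\nu^-(\mathcal A^-\square\mathcal B^-)+\nu(\{v\in\cdot\})\,\bigl(\nu^+(\mathcal A^+\square\mathcal B^-)+\nu^+(\mathcal A^-\square\mathcal B^+)\bigr)\]
into $\nu(\mathcal A)\nu(\mathcal B)$ uses that the two conditional laws $\nu^+$ and $\nu^-$ coincide (independence), so that after applying the inductive BK bound the cross terms pair up exactly with the expansion of $\nu(\mathcal A)\nu(\mathcal B)=\bigl(p\,\nu^+(\mathcal A^+)+q\,\nu^-(\mathcal A^-)\bigr)\bigl(p\,\nu^+(\mathcal B^+)+q\,\nu^-(\mathcal B^-)\bigr)$. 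When $\nu^+\ne\nu^-$ you are left needing inequalities that mix the two conditional measures, such as $\nu^+(\mathcal A^+)\nu^+(\mathcal B^-)\le\nu^+(\mathcal A^+)\nu^-(\mathcal B^-)$-type bounds with the \emph{wrong} direction for increasing events under the domination $\nu^+\preceq\nu^-$; ``compensating using negative dependence and a sharper handling'' is exactly the unsolved difficulty, not a step you can defer. Even for the uniform $k$-subset measure (the simplest homogeneous strongly Rayleigh measure) no such induction is known to work, and van den Berg and Jonasson had to argue differently. The paper's actual proof sidesteps all of this: it partitions $\mathcal M\times\mathcal M$ according to the cycle--path decomposition of $C\cup D$, observes that within each block the pair $(B(C),B(D))$ is parametrised by complementary subsets of the path-endpoint set $U$, and applies Reimer's inequality (Theorem \ref{rthm}) on each block. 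If you want to salvage your approach, you would either have to prove BK for homogeneous strongly Rayleigh measures in general (a significant open problem) or, more realistically, import a block decomposition of the above kind --- at which point the strong Rayleigh structure is no longer doing the work.
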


For a random subset with independent marginals, the BK inequality was proved by van den Berg and Kesten \cite{bk}. Later, van den Berg and  Jonasson proved that it also holds for a uniform random $k$ element subset \cite{bj}. There is an extension of the notion  $\mathcal{A}\square \mathcal{B}$ for arbitrary events, see Subsection \ref{gendef}.  With this definition, the BK inequality holds for all events in the case of  a random subset with independent marginals. This was conjectured by van den Berg and Kesten \cite{bk}, and  proved by Reimer \cite{reimer}. See also the paper of van den Berg and  Gandolfi~\cite{bg} for further results.

We say that an event $\mathcal{A}$  depends only on $V_0\subseteq V$, if for any $A,B\subseteq V$ the conditions $A\cap V_0=B\cap V_0$ and $A\in \mathcal{A}$ imply that $B\in \mathcal{A}$. Note that if $\mathcal{A}$ and $\mathcal{B}$ are increasing events depending on disjoint subsets of $V$, then $\mathcal{A}\square\mathcal{B}=\mathcal{A}\cap \mathcal{B}$. Thus, Theorem \ref{thm1} has the following corollary.

\begin{corr}\label{corna}
Let $B(M)$ be like above, then $B(M)$ has negative associations, which means the following. Let $\mathcal{A}$ and $\mathcal{B}$ be events depending on disjoint subsets of $V$. If $\mathcal{A}$ and $\mathcal{B}$ are both increasing or both decreasing, then
\[\mathbb{P}(B(M)\in \mathcal{A}\cap \mathcal{B} )\le \mathbb{P}(B(M)\in\mathcal{A})\mathbb{P}(B(M)\in\mathcal{B}).\]
If $\mathcal{A}$ is increasing and $\mathcal{B}$ is decreasing, then
\[\mathbb{P}(B(M)\in \mathcal{A}\cap \mathcal{B} )\ge \mathbb{P}(B(M)\in\mathcal{A})\mathbb{P}(B(M)\in\mathcal{B}).\] 
\end{corr}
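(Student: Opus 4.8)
The plan is to deduce Corollary~\ref{corna} from Theorem~\ref{thm1} together with the remark preceding it, namely that two increasing events depending on disjoint subsets of $V$ satisfy $\mathcal{A}\square\mathcal{B}=\mathcal{A}\cap\mathcal{B}$; the decreasing cases are then obtained by passing to complementary \emph{events}, a standard manipulation. I do not expect any real obstacle here. The only point requiring care is to distinguish the complement $2^{V}\setminus\mathcal{A}$ of an event from the complement $V\setminus A$ of a set, and to note that complementation of an event preserves "depends only on $V_0$" while swapping "increasing" and "decreasing"; in the decreasing cases it is the event complement that is used.

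\emph{Both increasing.} If $\mathcal{A},\mathcal{B}$ are increasing and depend on disjoint subsets of $V$, the remark gives $\mathcal{A}\square\mathcal{B}=\mathcal{A}\cap\mathcal{B}$, hence $\mathbb{P}(B(M)\in\mathcal{A}\cap\mathcal{B})=\mathbb{P}(B(M)\in\mathcal{A}\square\mathcal{B})\le\mathbb{P}(B(M)\in\mathcal{A})\,\mathbb{P}(B(M)\in\mathcal{B})$ by Theorem~\ref{thm1}, which is the claim.

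\emph{Mixed case, then both decreasing.} Suppose $\mathcal{A}$ is increasing, $\mathcal{B}$ is decreasing, and they depend on disjoint subsets. Then $\mathcal{B}^{c}:=2^{V}\setminus\mathcal{B}$ is increasing and depends on the same subset as $\mathcal{B}$, so the previous case applied to the pair $(\mathcal{A},\mathcal{B}^{c})$ gives $\mathbb{P}(B(M)\in\mathcal{A}\cap\mathcal{B}^{c})\le\mathbb{P}(B(M)\in\mathcal{A})\,\mathbb{P}(B(M)\in\mathcal{B}^{c})$. Substituting $\mathbb{P}(B(M)\in\mathcal{A}\cap\mathcal{B}^{c})=\mathbb{P}(B(M)\in\mathcal{A})-\mathbb{P}(B(M)\in\mathcal{A}\cap\mathcal{B})$ and $\mathbb{P}(B(M)\in\mathcal{B}^{c})=1-\mathbb{P}(B(M)\in\mathcal{B})$ and rearranging yields $\mathbb{P}(B(M)\in\mathcal{A}\cap\mathcal{B})\ge\mathbb{P}(B(M)\in\mathcal{A})\,\mathbb{P}(B(M)\in\mathcal{B})$, the asserted reverse inequality. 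The case where both $\mathcal{A}$ and $\mathcal{B}$ are decreasing follows in the same way: apply the mixed case to the increasing event $\mathcal{A}^{c}:=2^{V}\setminus\mathcal{A}$ and the decreasing event $\mathcal{B}$ (still on disjoint subsets) to get $\mathbb{P}(B(M)\in\mathcal{A}^{c}\cap\mathcal{B})\ge\mathbb{P}(B(M)\in\mathcal{A}^{c})\,\mathbb{P}(B(M)\in\mathcal{B})$, then substitute $\mathbb{P}(B(M)\in\mathcal{A}^{c}\cap\mathcal{B})=\mathbb{P}(B(M)\in\mathcal{B})-\mathbb{P}(B(M)\in\mathcal{A}\cap\mathcal{B})$ and $\mathbb{P}(B(M)\in\mathcal{A}^{c})=1-\mathbb{P}(B(M)\in\mathcal{A})$ and rearrange to obtain $\mathbb{P}(B(M)\in\mathcal{A}\cap\mathcal{B})\le\mathbb{P}(B(M)\in\mathcal{A})\,\mathbb{P}(B(M)\in\mathcal{B})$.

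For the decreasing–decreasing case there is also a more structural route worth noting: since $V\setminus B(M)=V(M)\Delta T$, complementation in $V$ carries the random set $B(M)$ attached to $G=(S,T,E)$ to the analogous random set attached to $G'=(T,S,E)$, and because $|B(M)|=|S|$ is constant a decreasing event for $B(M)$ becomes an increasing event for $V\setminus B(M)$; applying Theorem~\ref{thm1} to $G'$ then reduces this case directly to the increasing–increasing case. I would keep the event-complementation argument above as the main one, since it stays within the original graph and uses nothing beyond Theorem~\ref{thm1} and the remark.
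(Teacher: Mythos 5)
Your proof is correct and follows essentially the same route as the paper, which derives the increasing--increasing case from Theorem~\ref{thm1} via the observation that $\mathcal{A}\square\mathcal{B}=\mathcal{A}\cap\mathcal{B}$ for increasing events depending on disjoint subsets, and leaves the remaining cases to the standard event-complementation argument you spell out. Your verification that complementing an event preserves the dependence set while swapping increasing and decreasing, and the subsequent algebra, are all accurate; the alternative structural remark about $V\setminus B(M)$ is a nice observation but not needed.
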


Now we give a few extensions of Theorem \ref{thm1}. Assume that every edge~$e$ of $G$ has positive weight $w(e)$. For a matching $M$, we define the weight of $M$ as \break $w(M)=\prod_{e\in M} w(e)$. Let $M$ be a random matching, where the probability of a matching is proportional to its weight. We have the following extension of \break Theorem~\ref{thm1}.

\begin{theorem}\label{thm2}
Let $M$ be like above. Then $B(M)$ satisfies the BK inequality for increasing events, that is, if $\mathcal{A}$ and $\mathcal{B}$ are upward closed subsets of $2^V$, then
\[\mathbb{P}(B(M)\in \mathcal{A}\square \mathcal{B} )\le \mathbb{P}(B(M)\in\mathcal{A})\mathbb{P}(B(M)\in\mathcal{B}).\] 
\end{theorem}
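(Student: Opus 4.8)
The plan is to deduce the statement from Theorem~\ref{thm1} by an auxiliary-graph construction together with a continuity argument. For each event $\mathcal{E}$ the probability $\mathbb{P}(B(M)\in\mathcal{E})$ is a ratio of two polynomials in the weights $(w(e))_{e\in E}$, the denominator $\sum_M\prod_{e\in M}w(e)$ being everywhere positive (it has constant term $1$, from the empty matching); hence all three probabilities in the asserted inequality depend continuously on $w\in(0,\infty)^E$, and it suffices to prove the inequality for rational weights and then let $w$ converge.

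So let $w(e)=p_e/q_e$ with $p_e,q_e\in\mathbb{Z}_{\ge 1}$, let $D$ be a common multiple of the $q_e$, so each $n_e:=D^2w(e)$ is a positive integer, and build a bipartite multigraph $\widehat G$ from $G$ by (i) replacing every edge $e=\{s,t\}$ by $n_e$ parallel copies, and (ii) attaching to every $v\in S\cup T$ a set of $D-1$ new pendant leaves, joined to $v$ alone and placed on the side opposite $v$. Writing $\widehat S=S\cup\{\text{leaves of }T\}$, $\widehat T=T\cup\{\text{leaves of }S\}$ and $\widehat V$ for the vertex set, we have $V\subseteq\widehat V$ with $\widehat S\cap V=S$, $\widehat T\cap V=T$. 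For a matching $M'$ of $\widehat G$ let $M_0\subseteq E$ consist of the edges of $G$ having a copy in $M'$; two $G$-edges sharing a vertex cannot both have a copy in $M'$, so $M_0$ is a matching of $G$. Counting extensions — for each $e\in M_0$ there are $n_e$ choices of copy, each vertex of $V$ uncovered by $M_0$ may be left alone or paired with one of its $D-1$ leaves ($D$ ways), and nothing else is in $M'$ — gives, for uniform $M'$,
\[\mathbb{P}(M_0=M_\star)\ \propto\ \Big(\prod_{e\in M_\star}n_e\Big)D^{\,|V|-2|M_\star|}\ =\ \prod_{e\in M_\star}\frac{n_e}{D^2}\ =\ \prod_{e\in M_\star}w(e),\]
so $M_0$ has the law of the $w$-weighted random matching; in particular $B(M_0)$ and $B(M)$ are equidistributed.

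It remains to express $B(M_0)$ through $B_{\widehat G}(M'):=V(M')\,\Delta\,\widehat S$. A vertex $v\in V$ is covered by a genuine $G$-edge of $M'$ exactly when $v$ is covered in $\widehat G$ but no leaf of $v$ is; writing this out in terms of the indicator set $Y=B_{\widehat G}(M')$ shows that for every $v$ the condition ``$v\in B(M_0)$'' depends on $Y$ only through $Y\cap G_v$, where $G_v=\{v\}\cup\{\text{leaves of }v\}$, and is \emph{monotone increasing} in $Y$. Thus $B(M_0)=\phi(B_{\widehat G}(M'))$ for a monotone $\phi\colon 2^{\widehat V}\to 2^V$; consequently $\phi^{-1}(\mathcal A),\phi^{-1}(\mathcal B)$ are increasing on $2^{\widehat V}$ when $\mathcal A,\mathcal B$ are, and $\mathbb{P}(B(M)\in\mathcal A)=\mathbb{P}(B_{\widehat G}(M')\in\phi^{-1}(\mathcal A))$, likewise for $\mathcal B$. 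Because the sets $G_v$ ($v\in V$) partition $\widehat V$ and $\phi(Y)$ reads off one ``bit'' from each $Y\cap G_v$, a disjoint decomposition $\phi(Y)=C\sqcup D$ with $C\in\mathcal A$, $D\in\mathcal B$ lifts — by steering all of $Y\cap G_v$ into the part indicated by where $v$ lands (either part when $v\notin\phi(Y)$) — to a disjoint decomposition $Y=Y_A\sqcup Y_B$ with $\phi(Y_A)=C$, $\phi(Y_B)=D$; that is, $\phi^{-1}(\mathcal A\square\mathcal B)\subseteq\phi^{-1}(\mathcal A)\square\phi^{-1}(\mathcal B)$. Applying Theorem~\ref{thm1} to $\widehat G$ (its proof being unaffected by parallel edges) with the events $\phi^{-1}(\mathcal A),\phi^{-1}(\mathcal B)$ then yields
\[\mathbb{P}(B(M)\in\mathcal A\square\mathcal B)=\mathbb{P}\big(B_{\widehat G}(M')\in\phi^{-1}(\mathcal A\square\mathcal B)\big)\le\mathbb{P}\big(B_{\widehat G}(M')\in\phi^{-1}(\mathcal A)\square\phi^{-1}(\mathcal B)\big)\le\mathbb{P}(B(M)\in\mathcal A)\,\mathbb{P}(B(M)\in\mathcal B),\]
and the general case follows by continuity.

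I expect the main obstacle to be the transport of the box operation — the inclusion $\phi^{-1}(\mathcal A\square\mathcal B)\subseteq\phi^{-1}(\mathcal A)\square\phi^{-1}(\mathcal B)$ — which forces one to track carefully how a disjoint witnessing pair for $\phi(Y)$ pulls back to one for $Y$; the rigid block structure of $\widehat V$ into the sets $G_v$ is precisely what makes this work. A lesser point is to confirm that nothing in the proof of Theorem~\ref{thm1} used simplicity of the graph. Alternatively one could avoid the reduction and rerun the proof of Theorem~\ref{thm1} verbatim, counting each matching with weight $w(M)=\prod_{e\in M}w(e)$ rather than with multiplicity one — the only feature of the uniform measure used there being that its unnormalised weights are multiplicative over disjoint edge sets — in which case the task reduces to re-checking any step that invoked the symmetry of the uniform distribution.
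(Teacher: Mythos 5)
Your proof is correct, but it takes a genuinely different route from the paper's. The paper proves Theorem~\ref{thm2} in two lines on top of the machinery already built for Theorem~\ref{thm1}: the partition $\mathcal{M}\times\mathcal{M}=\bigcup_{i\in I}X_i$ of Lemma~\ref{lemma1} has the property that every pair $(C,D)\in X_i$ uses the same multiset of edges, namely $K\cup L\cup R$, so $w(C)\cdot w(D)=w(K)w(L)w(R)$ is constant on $X_i$; hence the purely counting inequality (\ref{partineq}), which was the whole content of the proof of Theorem~\ref{thm1}, upgrades for free to the weighted statement after multiplying each side by this constant and summing over $i$. This is exactly the ``multiplicative over disjoint edge sets'' feature you guessed at in your closing remark. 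Your main argument instead treats Theorem~\ref{thm1} as a black box: you encode rational weights by a multigraph gadget (parallel copies for the numerators $n_e=D^2w(e)$, pendant leaves for the common denominator $D$), check that the induced matching $M_0$ has the $w$-weighted law, push the uniform BK inequality on $\widehat{G}$ through the monotone block-local projection $\phi$, and finish by continuity in $w$. The steps you flag as delicate do all work: the inclusion $\phi^{-1}(\mathcal{A}\square\mathcal{B})\subseteq\phi^{-1}(\mathcal{A})\square\phi^{-1}(\mathcal{B})$ holds precisely because the blocks $G_v$ partition $\widehat{V}$ and $\phi$ reads one monotone bit per block (one checks $\phi(Y_A)=C$ and $\phi(Y_B)=D$ for your steering), and nothing in the proof of Theorem~\ref{thm1} uses simplicity of the graph --- parallel edges only add $2$-cycles to the multigraph $C\cup D$, which land in the cycle part $Q$ and are harmless. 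What the paper's route buys is brevity and the avoidance of both the multigraph extension and the limiting argument; what yours buys is a reduction scheme usable whenever a BK-type inequality is known only for the uniform measure, at the cost of the gadget bookkeeping.
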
 

Furthermore, let $V_+$ and $V_-$ be disjoint subsets of $V$. Let $M'$ have the same distribution as $M$ conditioned on the event that $V_+\subseteq B(M)$ and $V_-\cap B(M)=\emptyset$. Let $V'=V\backslash(V_+\cup V_-)$, and let $B'(M')=B(M')\cap V'$. Clearly, $B'(M')$ is a random subset of $V'$.
\begin{theorem}\label{thm3}
The random subset $B'(M')$ satisfies  the BK inequality for increasing events.
\end{theorem}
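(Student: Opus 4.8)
The plan is to derive Theorem~\ref{thm3} from Theorem~\ref{thm2}. The conditioning in Theorem~\ref{thm3} splits into a ``deletion'' part, which literally replaces $G$ by a subgraph, and a ``must be covered'' part, which I will realize as a limit of edge weightings; at the end I use that the BK inequality for increasing events is stable both under weak limits of the law and under restriction of the ground set. Throughout assume the conditioning event has positive probability (otherwise the statement is vacuous).

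First translate the conditioning. Write $S_+=V_+\cap S$, $T_+=V_+\cap T$, $S_-=V_-\cap S$, $T_-=V_-\cap T$. Since for $v\in S$ we have $v\in B(M)$ iff $v$ is \emph{not} covered by $M$, and for $v\in T$ we have $v\in B(M)$ iff $v$ \emph{is} covered by $M$, the event $\{V_+\subseteq B(M),\ V_-\cap B(M)=\emptyset\}$ is exactly the event that $V(M)$ avoids $U:=S_+\cup T_-$ and contains $C:=S_-\cup T_+$; note $U\cap C=\emptyset$ and $V'=V\setminus(U\cup C)$. A matching of $G$ avoiding $U$ is the same as a matching of $G_1:=G-U$ with the same weight, so, writing $M_1$ for the weight‑proportional random matching of $G_1$ with its bipartition $S_1=S\setminus U$, $T_1=T\setminus U$ and $B_1(M'')=V(M'')\Delta S_1$, the matching $M'$ has the law of $M_1$ conditioned on $C\subseteq V(M_1)$. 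Moreover $V'=V(G_1)\setminus C$, and for every $v\in V'$ the statements ``$v\in B(M')$'' and ``$v\in B_1(M_1)$'' coincide under the identification of $M'$ with the corresponding matching of $G_1$ (because $v\notin U$ puts $v$ on the same side in both graphs and the two matchings have the same vertex set). Hence $B'(M')$ has the same law as $\bigl(B_1(M_1)\mid C\subseteq V(M_1)\bigr)\cap V'$.

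So it suffices to show: for a positively weighted bipartite graph $H$ with weight‑proportional random matching $M_H$, and $C\subseteq V(H)$ covered by some matching, the conditioned subset $X:=\bigl(B(M_H)\mid C\subseteq V(M_H)\bigr)$ has the property that $X\cap(V(H)\setminus C)$ satisfies the BK inequality for increasing events. Here reweight each edge $e$ of $H$ by $W^{|e\cap C|}$ (a positive number), obtaining $H^{(W)}$; since a matching $M$ meets each $c\in C\cap V(M)$ in exactly one edge, the new weight of $M$ is $w(M)\,W^{|C\cap V(M)|}$. As $W\to\infty$ the matchings with $C\subseteq V(M)$ dominate, so the law of $B(M_{H^{(W)}})$ converges, pointwise on the finite set $2^{V(H)}$, to the law of $X$. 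Theorem~\ref{thm2} applied to $H^{(W)}$ says each $B(M_{H^{(W)}})$ satisfies the BK inequality; as all three quantities in $\mathbb{P}(\cdot\in\mathcal{A}\square\mathcal{B})\le\mathbb{P}(\cdot\in\mathcal{A})\,\mathbb{P}(\cdot\in\mathcal{B})$ are continuous functions of the underlying distribution, the limit law, that of $X$, satisfies the BK inequality on the ground set $V(H)$.

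It remains to observe that the BK inequality for increasing events descends to subsets of the ground set: if a random subset $X$ of a finite set $\Omega$ satisfies it, then for $\Omega'\subseteq\Omega$ and increasing $\mathcal{A},\mathcal{B}\subseteq 2^{\Omega'}$, apply the inequality for $X$ to the increasing events $\widehat{\mathcal{A}}=\{Y\subseteq\Omega:Y\cap\Omega'\in\mathcal{A}\}$ and $\widehat{\mathcal{B}}$; one checks $\{X\in\widehat{\mathcal{A}}\square\widehat{\mathcal{B}}\}=\{X\cap\Omega'\in\mathcal{A}\square\mathcal{B}\}$ (one inclusion is immediate from intersecting a disjoint decomposition with $\Omega'$; for the other, given a disjoint decomposition of $X\cap\Omega'$ append $X\setminus\Omega'$ to one of the two parts), and since $\mathbb{P}(X\in\widehat{\mathcal{A}})=\mathbb{P}(X\cap\Omega'\in\mathcal{A})$ the claim follows. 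Using this with $\Omega=V(H)=V(G_1)$ and $\Omega'=V(G_1)\setminus C=V'$ finishes the proof. The one genuinely delicate point is the ``must be covered'' half of the conditioning: unlike the ``must be uncovered'' half it is not induced by any subgraph, and the weighting limit is precisely what keeps the argument inside the reach of Theorem~\ref{thm2}.
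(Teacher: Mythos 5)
Your proof is correct, but it takes a genuinely different route from the paper's. The paper simply re-runs the partition argument of Theorem~\ref{thm1}: it sets $\mathcal{M}'=\{M\in\mathcal{M}:V_+\subseteq B(M),\ V_-\cap B(M)=\emptyset\}$, restricts the index set to $I'=\{i\in I: V_+\subseteq B_i,\ V_-\cap(B_i\cup H_i)=\emptyset\}$, notes that $(X_i)_{i\in I'}$ partitions $\mathcal{M}'\times\mathcal{M}'$ (the point being that for $(C,D)\in X_i$ one has $B(C)\cap B(D)=B_i$ and $B(C)\cup B(D)=B_i\cup H_i$, so whether both coordinates lie in $\mathcal{M}'$ is decided by $i$ alone), and then repeats the proof of Inequality~(\ref{partineq}) verbatim. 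You instead derive Theorem~\ref{thm3} from Theorem~\ref{thm2} used as a black box: the ``must be uncovered'' half of the conditioning is absorbed into the vertex-deleted subgraph $G-U$, the ``must be covered'' half is realized as the $W\to\infty$ limit of the reweighting $w(e)\mapsto w(e)W^{|e\cap C|}$, and you conclude with two general closure properties of the BK property --- stability under pointwise limits of laws on a finite ground set, and descent from $X$ to $X\cap\Omega'$ via the lifted events $\widehat{\mathcal{A}}=\{Y:Y\cap\Omega'\in\mathcal{A}\}$, for which your identity $\widehat{\mathcal{A}}\square\widehat{\mathcal{B}}=\{Y:Y\cap\Omega'\in\mathcal{A}\square\mathcal{B}\}$ is the key step and is verified correctly (your handling of the degenerate case, where no matching of $G-U$ covers $C$ and the conditioning is vacuous, is also the right caveat). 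Your argument is longer but more modular: it exhibits the conditioned statement as a formal consequence of the weighted one, and both closure lemmas are reusable elsewhere; the paper's is shorter because the partition machinery is already in place and restricts cleanly to $\mathcal{M}'\times\mathcal{M}'$.
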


This has the following corollary.
\begin{corr}\label{corsubm}
Let $M$ be like above. Then for any subset $X$ and $Y$ of $V$, we have
\[\mathbb{P}(X\subseteq B(M))\mathbb{P}(Y\subseteq B(M))\ge \mathbb{P}(X\cap Y\subseteq B(M))\mathbb{P}(X\cup Y\subseteq B(M)).\]
\end{corr}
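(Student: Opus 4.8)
The plan is to reduce the statement to Theorem \ref{thm3} by conditioning on the event that $X\cap Y\subseteq B(M)$. First I would dispose of the degenerate case: if $\mathbb{P}(X\cap Y\subseteq B(M))=0$ the left-hand side vanishes and there is nothing to prove, so assume this probability is positive. Write $Z=X\cap Y$, $P=X\setminus Z$ and $Q=Y\setminus Z$. These three sets are pairwise disjoint, and $X=Z\cup P$, $Y=Z\cup Q$, $X\cup Y=Z\cup P\cup Q$ are all disjoint unions, which is the feature that makes the whole argument go through.

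Next I would apply Theorem \ref{thm3} with $V_+=Z$ and $V_-=\emptyset$. This produces a random matching $M'$ distributed as $M$ conditioned on $Z\subseteq B(M)$, and, with $V'=V\setminus Z$ and $B'(M')=B(M')\cap V'$, the random subset $B'(M')$ of $V'$ satisfies the BK inequality for increasing events. Now take $\mathcal{A}=\{A\subseteq V':P\subseteq A\}$ and $\mathcal{B}=\{A\subseteq V':Q\subseteq A\}$. These are increasing events depending on the disjoint subsets $P$ and $Q$ of $V'$, so, exactly as noted before Corollary \ref{corna} (applied with $V'$ in place of $V$), $\mathcal{A}\square\mathcal{B}=\mathcal{A}\cap\mathcal{B}=\{A\subseteq V':P\cup Q\subseteq A\}$. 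Applying the BK inequality for $B'(M')$ to $\mathcal{A}$ and $\mathcal{B}$ then gives
\[\mathbb{P}(P\cup Q\subseteq B'(M'))\le\mathbb{P}(P\subseteq B'(M'))\,\mathbb{P}(Q\subseteq B'(M')).\]

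Finally I would translate this back into unconditional probabilities. Since $Z\subseteq B(M')$ almost surely, $B'(M')=B(M')\cap V'$, and $Z$, $P$, $Q$ are disjoint, the definition of $M'$ yields $\mathbb{P}(Z\subseteq B(M))\,\mathbb{P}(P\subseteq B'(M'))=\mathbb{P}(Z\cup P\subseteq B(M))=\mathbb{P}(X\subseteq B(M))$, and similarly $\mathbb{P}(Z\subseteq B(M))\,\mathbb{P}(Q\subseteq B'(M'))=\mathbb{P}(Y\subseteq B(M))$ as well as $\mathbb{P}(Z\subseteq B(M))\,\mathbb{P}(P\cup Q\subseteq B'(M'))=\mathbb{P}(X\cup Y\subseteq B(M))$. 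Multiplying the displayed inequality through by $\mathbb{P}(Z\subseteq B(M))^2=\mathbb{P}(X\cap Y\subseteq B(M))^2$ and regrouping the factors by means of these three identities produces exactly
\[\mathbb{P}(X\cap Y\subseteq B(M))\,\mathbb{P}(X\cup Y\subseteq B(M))\le\mathbb{P}(X\subseteq B(M))\,\mathbb{P}(Y\subseteq B(M)),\]
which is the claim. I do not anticipate a genuine obstacle: the only points needing care are checking that $\mathcal{A}$ and $\mathcal{B}$ depend on disjoint coordinate sets (so that $\square$ collapses to $\cap$) and that the disjointness of $Z$, $P$, $Q$ makes the conditioning bookkeeping exact. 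All of the real content is already carried by Theorem \ref{thm3}.
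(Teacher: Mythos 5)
Your proof is correct and follows essentially the same route as the paper: condition on $X\cap Y\subseteq B(M)$ via Theorem \ref{thm3} with $V_+=X\cap Y$, $V_-=\emptyset$, observe that the increasing events $X\setminus Y\subseteq B$ and $Y\setminus X\subseteq B$ depend on disjoint sets so that $\square$ collapses to $\cap$, and multiply back by $\mathbb{P}(X\cap Y\subseteq B(M))^2$. The only difference is that you spell out the bookkeeping and the degenerate case explicitly, which the paper leaves implicit.
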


We can also deduce the following theorem from Theorem \ref{thm2}.
\begin{theorem}\label{thm4}
Let $M$ be uniform random maximum size matching.  Then the random subset $B(M)$ satisfies  the BK inequality for increasing events.
\end{theorem}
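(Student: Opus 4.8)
The plan is to realise the uniform random maximum size matching as a limit of the weighted models covered by Theorem~\ref{thm2}, and then pass to the limit in the BK inequality; this is a short soft-analysis argument.

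First I would fix a real parameter $t>1$ and assign every edge of $G$ the common weight $t$. Writing $M_t$ for the resulting weighted random matching, we have $\mathbb{P}(M_t=M)=t^{|M|}/Z(t)$ with $Z(t)=\sum_{M'\in\mathcal{M}}t^{|M'|}$. If $N_j$ denotes the number of matchings of $G$ with exactly $j$ edges and $k$ denotes the maximum matching size, then $Z(t)=\sum_{j}N_j t^{j}$ has leading term $N_k t^{k}$, so as $t\to\infty$
\[\mathbb{P}(M_t=M)=\frac{t^{|M|}}{\sum_{j}N_j t^{j}}\longrightarrow
\begin{cases}1/N_k,& |M|=k,\\ 0,& |M|<k.\end{cases}\]
Hence $M_t$ converges in distribution to a uniform random maximum size matching $M$.

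Next, for increasing events $\mathcal{A},\mathcal{B}$ I would note that each of $\{M'\in\mathcal{M}: B(M')\in\mathcal{A}\}$, $\{M'\in\mathcal{M}: B(M')\in\mathcal{B}\}$ and $\{M'\in\mathcal{M}: B(M')\in\mathcal{A}\square\mathcal{B}\}$ is a fixed subset of the finite set $\mathcal{M}$. Therefore the convergence in distribution above forces
\[\mathbb{P}(B(M_t)\in\mathcal{A})\to\mathbb{P}(B(M)\in\mathcal{A}),\qquad \mathbb{P}(B(M_t)\in\mathcal{B})\to\mathbb{P}(B(M)\in\mathcal{B}),\]
and similarly $\mathbb{P}(B(M_t)\in\mathcal{A}\square\mathcal{B})\to\mathbb{P}(B(M)\in\mathcal{A}\square\mathcal{B})$. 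Applying Theorem~\ref{thm2} to $M_t$ for each $t>1$ and letting $t\to\infty$ then gives $\mathbb{P}(B(M)\in\mathcal{A}\square\mathcal{B})\le\mathbb{P}(B(M)\in\mathcal{A})\,\mathbb{P}(B(M)\in\mathcal{B})$, which is the assertion.

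I do not expect a genuine obstacle here; the only point that really must be checked is that the weak limit of $M_t$ is precisely the uniform distribution on maximum size matchings, which is immediate from the computation of the leading term of $Z(t)$. If one prefers to avoid limits, the same reasoning works with $t$ finite but large enough that the total weight of non-maximum matchings is negligible, though taking $t\to\infty$ is cleaner.
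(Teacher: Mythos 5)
Your proposal is correct and is essentially identical to the paper's own proof: the paper also sets all edge weights equal to $t$, applies Theorem~\ref{thm2} to the resulting weighted matching $M_t$, and lets $t\to\infty$, noting that $M_t$ converges to the uniform maximum size matching. Your explicit computation of the leading term of $Z(t)$ just fills in the one observation the paper leaves to the reader.
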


\textbf{Acknowledgements.} The author is grateful to P\'eter  Csikv\'ari and Mikl\'os Ab\'ert for their comments. The author was partially
supported by the ERC Consolidator Grant 648017.



\section{The proofs}

\subsection{The definition of $\mathcal{A}\square\mathcal{B}$ for arbitrary events}
\label{gendef}
Let us recall how to extend the definition of $\mathcal{A}\square\mathcal{B}$ to arbitrary events. A subset $C$ of $V$ is in $\mathcal{A}\square \mathcal{B}$ if and only if there are disjoint subsets $V_A$ and $V_B$ of $V$ such that
\[\{D\subseteq V| D\cap V_A=C\cap V_A \}\subseteq \mathcal{A}\]
and 
\[\{D\subseteq V| D\cap V_B=C\cap V_B \}\subseteq \mathcal{B}.\]
If $\mathcal{A}$ and $\mathcal{B}$ are increasing, then this definition indeed coincides with our earlier definition.

\subsection{The proof of Theorem \ref{thm1}}

Our proof will use several ideas of Berg and Jonasson \cite{bj}.

Let $I$ be the set of tuples $(W,K,L,R)$, where $W$ is a subset of $V$, $K$ and $L$ are perfect matchings in the induced  subgraph $G[W]$, $R$ is a subgraph of $G[V\backslash W]$ consisting of vertex disjoint paths.\footnote{In our terminology, a path must have at least $1$ edge.} 

Fix a linear ordering of the edges of $G$. Consider an $i=(W,K,L,R)\in I$. Then $R$ is the vertex disjoint union of the paths $P_1,P_2,\dots,P_k$, where we list the paths in increasing order of their lowest edge. We can write $P_j$ as the union of the matchings $M_{j,0}$ and $M_{j,1}$, this decomposition is unique once we assume that $M_{j,0}$ contains the lowest edge of $P_j$. For $\omega=(\omega_1,\omega_2,\dots,\omega_k)\in \{0,1\}^k$, we define the matchings
\[C_{i,\omega}=K\cup\cup_{j=1}^k M_{j,\omega_j}\quad\text{and}\quad D_{i,\omega}=L\cup\cup_{j=1}^k M_{j,1-\omega_j}.\]      
Moreover, we define
\[Y_i^C=\{C_{i,\omega}|\quad \omega\in\{0,1\}^k\},\]
\[Y_i^D=\{D_{i,\omega}|\quad \omega\in\{0,1\}^k\},\]
and
\[X_i=\{(C_{i,\omega},D_{i,\omega})|\quad \omega\in\{0,1\}^k\}.\] 

Let $H_i$ be the set of endpoints of the paths $P_1,P_2,\dots,P_k$. Let $V(R)$ be the vertex set of $R$. Let $B_i=((W\cup V(R))\Delta S)\backslash H_i$. Let $v_{j,0}$ and $v_{j,1}$ be the two endpoints of~$P_j$. If we choose the indices in the right way, then we get that
\[B(C_{i,\omega})=B_i\cup\{v_{j,\omega_j}|\quad j=1,2,\dots,k\},\] 
and
\[B(D_{i,\omega})=B_i\cup\{v_{j,1-\omega_j}|\quad j=1,2,\dots,k\}.\] 

This immediately implies that 
\begin{multline}\label{eq1}
\{B(C_{i,\omega})|\quad \omega\in \{0,1\}^k\}=\{B(D_{i,\omega})|\quad \omega\in \{0,1\}^k\}=\\
\{B_i\cup H|\quad H\subseteq H_i\text{ and } |H\cap\{v_{j,0},v_{j,1}\}|=1\text{ for all }j=1,2,\dots,k\}.
\end{multline}

Let $U=\{v_{j,1}|\quad j=1,2,\dots,k\}$. We define the map $\tau_i:\mathcal{M}\to 2^U$ by \break $\tau_i(M)=B(M)\cap U$. It is clear from what is written above that the appropriate restriction of $\tau_i$ gives  a bijection from $Y_i^C$ to $2^U$, and also from $Y_i^D$ to $2^U$. Moreover,  
\begin{equation}
X_i=\{(C,D)\in Y_i^C\times Y_i^D|\quad \tau_i(C)=U\backslash \tau_i(D)\}. 
\end{equation}
\begin{lemma}\label{lemma1}
The sets $(X_i)_{i\in I}$ give a partition of $\mathcal{M}\times \mathcal{M}$.
\end{lemma}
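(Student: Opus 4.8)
The plan is to show that for every pair of matchings $(M_1,M_2)\in\mathcal{M}\times\mathcal{M}$ there is exactly one index $i\in I$ with $(M_1,M_2)\in X_i$. The natural construction is dictated by the symmetric difference: given $(M_1,M_2)$, consider the edge set $M_1\Delta M_2$. This is a disjoint union of simple paths and even cycles, where edges alternate between $M_1$ and $M_2$. Let $W$ be the union of the vertex sets of the cycle components together with all \emph{isolated} vertices with respect to $M_1\Delta M_2$ that happen to be covered by both $M_1$ and $M_2$ (equivalently, the edge $\{v\}$'s $M_1$- and $M_2$-edges coincide); more cleanly, let $W$ be the set of vertices $v$ such that $v$ is matched the same way in $M_1$ and in $M_2$, \emph{or} $v$ lies on a cycle of $M_1\Delta M_2$. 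One should check that $M_1$ restricted to $W$ and $M_2$ restricted to $W$ are both perfect matchings of $G[W]$; take these to be $K$ and $L$ respectively. Then $R:=M_1\Delta M_2$ restricted to $G[V\setminus W]$ is exactly the union of the path components, so it is a vertex-disjoint union of paths in $G[V\setminus W]$. This defines $i=(W,K,L,R)$, and one then picks $\omega$ so that on each path $P_j$ the matching $M_1$ contributes $M_{j,\omega_j}$ and $M_2$ contributes $M_{j,1-\omega_j}$; this forces $(M_1,M_2)=(C_{i,\omega},D_{i,\omega})\in X_i$.

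The key steps, in order: (1) verify that the $W,K,L,R$ just described genuinely satisfy the membership conditions for $I$ — in particular that $K,L$ are perfect matchings of the \emph{induced} subgraph $G[W]$ (the only edges of $M_1,M_2$ incident to $W$ stay inside $W$, because a vertex on a path component is not in $W$, and if a $W$-vertex were matched to a non-$W$-vertex the matching edge would have to be a path edge, contradiction) and that $R\subseteq G[V\setminus W]$ is a vertex-disjoint union of paths with at least one edge each (path components of a symmetric difference of two matchings always alternate and are nontrivial on $V\setminus W$); (2) check $(M_1,M_2)\in X_{i}$ for this $i$, i.e.\ that $M_1=K\cup\bigcup_j M_{j,\omega_j}$ and $M_2=L\cup\bigcup_j M_{j,1-\omega_j}$ for a suitable $\omega$, which is immediate from the decomposition $M_1=(M_1\cap W\text{-part})\cup(M_1\cap R)$ and the analogous one for $M_2$, together with the fact that on each $P_j$ the sets $M_1\cap P_j$ and $M_2\cap P_j$ are complementary matchings of $P_j$; (3) uniqueness: if $(M_1,M_2)\in X_{i'}$ for $i'=(W',K',L',R')$, then $M_1\Delta M_2 = R'$ as edge sets (since $K'=L'$ cancels on $W'$ and $M_{j,\omega_j}\Delta M_{j,1-\omega_j}=P_j$), so $R'$ is determined; the vertex set $V(R')$ is then determined, hence $W'=V\setminus$ (the vertices internal to removing $R'$) is forced to be the set described above... and finally $K'=M_1|_{W'}$, $L'=M_2|_{W'}$ are forced. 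The path-ordering convention (paths listed by increasing lowest edge, and $M_{j,0}$ containing the lowest edge of $P_j$) guarantees the labelling of the $M_{j,\cdot}$ is canonical, so $\omega$ is determined as well; thus $i'=i$.

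The main obstacle I expect is step (3), the uniqueness, and specifically pinning down that $W$ is forced: one must argue that no vertex of a path component of $M_1\Delta M_2$ can be placed in $W'$ (it would have degree $1$ in $R'$, but $R'$-paths have all internal degrees $2$ and endpoints... actually path endpoints do have degree $1$ in $R'$, so the real point is that $W'$ must be \emph{disjoint} from $V(R')$ by definition of $I$, and conversely every non-path vertex that is covered identically by $M_1,M_2$ or lies on a cycle must be in $W'$ because $K',L'$ are required to be \emph{perfect} matchings of $G[W']$, forcing $W'$ to be a union of $M_1$-edges and of $M_2$-edges simultaneously). Getting this ``forced'' argument airtight — handling isolated vertices (covered by neither $M_1$ nor $M_2$, which must go into $W'$ only if... actually they cannot, since $K'$ perfect on $W'$ would need to cover them) — is the delicate bookkeeping. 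Everything else is routine verification of the alternating-path structure of $M_1\Delta M_2$ and the already-established identity~\eqref{eq1}.
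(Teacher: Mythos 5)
Your construction of $i=(W,K,L,R)$ from $(M_1,M_2)$ is the same as the paper's (the paper phrases it via the multigraph $C\cup D$, whose doubled edges are exactly your ``identically matched'' vertices), and your existence argument, steps (1) and (2), is sound. The genuine problem is in step (3): the claim that $M_1\Delta M_2=R'$ ``since $K'=L'$ cancels on $W'$'' is false. Nothing in the definition of $I$ forces $K'=L'$; they are merely two perfect matchings of $G[W']$, and they differ precisely when $M_1$ and $M_2$ differ on the cycle part. Concretely, let $G$ be the $4$-cycle on $S=\{s_1,s_2\}$, $T=\{t_1,t_2\}$ and let $M_1,M_2$ be its two perfect matchings; then $(M_1,M_2)\in X_{i'}$ for $i'=(V,M_1,M_2,\emptyset)$, so $R'=\emptyset$ while $M_1\Delta M_2$ is the whole $4$-cycle. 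So the identity on which you base uniqueness fails, and with it the deduction that $V(R')$, and hence $W'$, is determined (your formula for $W'$ in terms of $V(R')$ is also off, since vertices covered by neither matching lie outside both $V(R')$ and $W'$ --- though you do catch that point at the end).

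The repair is exactly the step you elided: from $(M_1,M_2)\in X_{i'}$ one gets $M_1\Delta M_2=(K'\Delta L')\cup R'$, a disjoint union in which $K'\Delta L'$ is a vertex-disjoint union of even cycles inside $G[W']$ and $R'$ is a vertex-disjoint union of paths inside $G[V\setminus W']$. Since the decomposition of $M_1\Delta M_2$ into connected components is intrinsic to $(M_1,M_2)$, $R'$ must be the union of its path components and $K'\Delta L'$ the union of its cycle components. Combined with your (correct) observations that every vertex of $W'$ is covered by both $K'\subseteq M_1$ and $L'\subseteq M_2$, and that every identically matched vertex and every cycle vertex must land in $W'$, this forces $W'$, then $K'=M_1\cap E(G[W'])$ and $L'=M_2\cap E(G[W'])$, and finally $\omega$ via the labelling convention. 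With that correction the argument is complete and coincides with the paper's one-line proof.
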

\begin{proof}
Let $(C,D)\in \mathcal{M}\times \mathcal{M}$. Consider the multi-graph $C\cup D$, it is a vertex disjoint union of cycles and paths. Let $R$ be the union of paths, and let $Q$ be the union of cycles. Let $W$ be the vertices covered by the cycles. Let $i=(W,C\cap Q,D\cap Q,R)$. One can easily prove that $i$ is the unique element of $I$ such that $(C,D)\in X_i$.
\end{proof}

Given a subset $\mathcal{F}$ of $2^V$, we define $\mathcal{M}_{\mathcal{F}}$ as $\{M\in \mathcal{M}| B(M)\in \mathcal{F}\}$. 
The statement of Theorem \ref{thm1} is equivalent to the statement
\[|\mathcal{M}_{\mathcal{A}\square \mathcal{B}}\times \mathcal{M}|\le |\mathcal{M}_{\mathcal{A}}\times \mathcal{M}_{\mathcal{B}}|.\]

From Lemma \ref{lemma1}, it follows that it is enough to prove that for any $i\in I$, we have
\begin{equation}\label{partineq}
|(\mathcal{M}_{\mathcal{A}\square \mathcal{B}}\times \mathcal{M})\cap X_i|\le |(\mathcal{M}_{\mathcal{A}}\times \mathcal{M}_{\mathcal{B}})\cap X_i|.
\end{equation}

For a subset $\mathcal{F}$ of $2^V$ and $i\in I$, 
we define $\mathcal{F}^i=\{\tau_i(C)|C\in Y_i^C\cap\mathcal{M}_\mathcal{F}\}$. 
From~(\ref{eq1}), it follows that 
$\mathcal{F}^i=\{\tau_i(D)|D\in Y_i^D\cap\mathcal{M}_\mathcal{F}\}$. (Note that, even for  an increasing $\mathcal{F}$ it might happen that $\mathcal{F}^i$ is not increasing.)   For a subset $\mathcal{J}$ of $2^U$, we define $\overline{\mathcal{J}}=\{U\backslash J|J\in\mathcal{J}\}$.

Then 
\begin{align}\label{line1}
|(\mathcal{M}_{\mathcal{A}}\times&  \mathcal{M}_\mathcal{B})\cap X_i|\\&=
|\{(C,D)\in Y_i^C\times Y_i^D| \tau_i(C)\in\mathcal{A}^i,\tau_i(D)\in\mathcal{B}^i,\tau_i(C)=U\backslash\tau_i(D) \}|\nonumber\\&=
|\{(A,B)\in 2^U\times 2^U|A\in\mathcal{A}^i,B\in\mathcal{B}^i,A=U\backslash B \}|\nonumber\\&=|\mathcal{A}^i\cap \overline{\mathcal{B}^i}|.\nonumber
\end{align}

Similarly,
\begin{equation}\label{line2}
|(\mathcal{M}_{\mathcal{A}\square \mathcal{B}}\times \mathcal{M})\cap X_i|=|(\mathcal{A}\square\mathcal{B})^i|.
\end{equation}

\begin{lemma}\label{lemmatart}
We have
\[(\mathcal{A}\square \mathcal{B})^i\subseteq \mathcal{A}^i\square \mathcal{B}^i.\]
\end{lemma}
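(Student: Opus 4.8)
The plan is to unwind the definition of $\mathcal{A}\square\mathcal{B}$ (in the generalized sense of Subsection~\ref{gendef}, which agrees with the earlier one for increasing events), apply $\tau_i$, and then check that the "witnessing" sets transfer correctly. Concretely, suppose $B_0 \in (\mathcal{A}\square\mathcal{B})^i$; by definition there is some $C \in Y_i^C \cap \mathcal{M}_{\mathcal{A}\square\mathcal{B}}$ with $\tau_i(C)=B_0$. Since $B(C) \in \mathcal{A}\square\mathcal{B}$, there are disjoint sets $V_A, V_B \subseteq V$ such that every $D$ with $D \cap V_A = B(C) \cap V_A$ lies in $\mathcal{A}$, and symmetrically for $V_B$ and $\mathcal{B}$. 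The natural guess is that $V_A \cap U$ and $V_B \cap U$ witness $B_0 \in \mathcal{A}^i \square \mathcal{B}^i$, so the first step is to set $U_A = V_A \cap U$, $U_B = V_B \cap U$, note these are disjoint subsets of $U$, and try to verify the two required containments.

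The heart of the argument is the following: fix $J \subseteq U$ with $J \cap U_A = B_0 \cap U_A$; I must show $J \in \mathcal{A}^i$, i.e. there is a matching $C' \in Y_i^C \cap \mathcal{M}_{\mathcal{A}}$ with $\tau_i(C') = J$. Because $\tau_i$ restricted to $Y_i^C$ is a bijection onto $2^U$, there is a unique such $C' = C_{i,\omega'} \in Y_i^C$ with $\tau_i(C') = J$; the only thing to check is $C' \in \mathcal{M}_{\mathcal{A}}$, i.e. $B(C') \in \mathcal{A}$. By the $\mathcal{A}$-witness property of $V_A$, it suffices to show $B(C') \cap V_A = B(C) \cap V_A$. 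Using the explicit description $B(C_{i,\omega}) = B_i \cup \{v_{j,\omega_j} : j\}$ from~(\ref{eq1}), the sets $B(C')$ and $B(C)$ agree on all of $V$ except possibly on the vertices $v_{j,0}, v_{j,1}$ lying in $H_i$; moreover on each pair $\{v_{j,0},v_{j,1}\}$ the intersection with $B(C_{i,\omega})$ is determined by $\omega_j$, hence by $\tau_i = B(\cdot)\cap U$. So $B(C')$ and $B(C)$ can differ only at those $v_{j,1}\in U$ with $v_{j,1} \in J \,\triangle\, B_0$, equivalently those $j$ with $\omega'_j \ne \omega_j$; and for such $j$ we must also track the companion vertex $v_{j,0}$, which flips in the opposite direction. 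The point is that on $U$, $J \triangle B_0 \subseteq U \setminus U_A$ is disjoint from $V_A$ by construction; the remaining worry is the companion vertices $v_{j,0}$ (which are not in $U$) — but whether $v_{j,0}$ lies in $V_A$ is exactly the subtle case, and here one uses that flipping $\omega_j$ changes $B(C_{i,\omega})$ on the pair $\{v_{j,0},v_{j,1}\}$ from one singleton to the other, so $B(C')\cap V_A$ and $B(C)\cap V_A$ could still differ at $v_{j,0}$.

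I expect this last point to be the main obstacle, and I anticipate the resolution is the same device Berg–Jonasson use: one should not insist $B(C')\cap V_A = B(C)\cap V_A$ on the nose, but rather argue monotonically, or else choose which endpoint of each path goes into $U$ more carefully. Indeed recall $U = \{v_{j,1} : j\}$ was a free choice of one endpoint per path; the clean fix is to make that choice consistently with $V_A$ versus $V_B$ — but $V_A, V_B$ depend on $C$, which is circular. The honest route, then, is: since $\mathcal{A}$ is increasing, enlarge $C'$ to $C$ along the symmetric difference and use monotonicity — if a path-flip can only add vertices of $V_A$ to $B$, we are fine; if it can only remove them, we instead compare $J$ to $B_0$ in the other direction. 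Splitting $U_A$ according to which endpoint is "$v_{j,1}$" and exploiting that $\mathcal{A}$ is upward closed should let every flip be absorbed. Once the $\mathcal{A}$-containment is established, the $\mathcal{B}$-containment follows by the identical argument with the roles of $0$ and $1$ (equivalently $C$ and $D$) swapped, using the second half of~(\ref{eq1}) that $Y_i^D$ also maps bijectively onto $2^U$ under $\tau_i$. Assembling the two containments gives $B_0 \in \mathcal{A}^i \square \mathcal{B}^i$, proving the lemma.
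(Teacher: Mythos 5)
You correctly reduce the problem to exhibiting disjoint witness sets inside $U$, and you correctly locate the obstacle, but you do not overcome it, and the fixes you sketch do not work. With your choice $U_A=V_A\cap U$, fixing $J$ on $U_A$ does not control the companion vertices $v_{j,0}\notin U$: if $v_{j,0}\in V_A$ and $v_{j,1}\notin V_A$, then a $J$ agreeing with $\tau_i(C)$ on $U_A$ may still flip $\omega_j$, producing a $C'$ with $v_{j,0}\notin B(C')$ and hence $V_A\not\subseteq B(C')$; such a $J$ need not lie in $\mathcal{A}^i$, so $U_A$ is simply not a valid witness. The rescue by monotonicity that you propose cannot succeed: every $B(C')$ with $C'\in Y_i^C$ has the same cardinality $|S|$, and a flip replaces $v_{j,0}$ by $v_{j,1}$, so $B(C')$ is neither a superset nor a subset of $B(C)$ and upward closedness of $\mathcal{A}$ gives no purchase. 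Redefining $U$ itself is, as you note yourself, circular, and ``splitting $U_A$ \dots should let every flip be absorbed'' is a hope, not an argument. The witness condition quantifies over \emph{all} $J$ in the cylinder, so there is no freedom to ``compare in the other direction'' for the bad flips.

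The actual fix is simpler and consists of choosing the witness sets, not $U$, differently: keep $U=\{v_{j,1}\}$ but put $v_{j,1}$ into $U_A$ whenever the \emph{pair} $\{v_{j,0},v_{j,1}\}$ meets $V_A$ (and likewise for $U_B$). These sets are still disjoint, because each pair meets $B(C)=V_A\cup V_B$ in exactly one vertex, which lies in exactly one of the disjoint sets $V_A,V_B$. With this choice, fixing $J$ on $U_A$ fixes, for every path $P_j$ meeting $V_A$, which of its two endpoints lies in $B(C')$ (exactly one always does), hence forces $V_A\subseteq B(C')$, whence $B(C')\in\mathcal{A}$ by upward closedness and $J\in\mathcal{A}^i$. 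That is the paper's proof; your write-up identifies the right difficulty but stops one essential step short of resolving it.
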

\begin{proof}
Let $F\in (\mathcal{A}\square \mathcal{B})^i$, then $F=\tau_i(C)$ for some $C\in Y_i^C$ such that $B(C)\in \mathcal{A}\square \mathcal{B}$. Since $\mathcal{A}$ and $\mathcal{B}$ are upward closed, there are disjoint sets $V_A\in \mathcal{A}$ and $V_B\in\mathcal{B}$ such that $B(C)=V_A\cup V_B$. We define
\[U_A=\{v_{j,1}|\quad\{v_{j,0},v_{j,1}\}\cap V_A\neq\emptyset, j\in\{1,2,\dots,k\}\}\]
and
\[U_B=\{v_{j,1}|\quad\{v_{j,0},v_{j,1}\}\cap V_B\neq\emptyset, j\in\{1,2,\dots,k\}\}.\]
Since $V_A$ and $V_B$ are disjoint and $|B(C)\cap \{v_{j,0},v_{j,1}\}|=1$ for all $j$, we obtain that $U_A$ and $U_B$ are disjoint. 

Moreover, if for some $C'\in Y_i^C$, we have $\tau_i(C)\cap U_A=\tau_i(C')\cap U_A$, then $ V_A\subseteq B(C')$, consequently $B(C')\in\mathcal{A}$ and $\tau_i(C')\in \mathcal{A}^i$. The analogous statement is true for $V_B$ and $U_B$. Therefore, the pair $U_A,U_B$ witnesses that $F=\tau_i(C)\in \mathcal{A}^i\square \mathcal{B}^i$.       
\end{proof}

Recall the following theorem of Reimer \cite{reimer}. See also \cite{bj}.
\begin{theorem}[Reimer]\label{rthm}
Let $\mathcal{X}$ and $\mathcal{Y}$ be subsets of $2^U$, where $U$ is a finite set. Then
\[|\mathcal{X}\square \mathcal{Y}|\le |\mathcal{X}\cap \overline{\mathcal{Y}}|.\]
\end{theorem}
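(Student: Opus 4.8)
This is Reimer's inequality; in the paper it is rightly invoked as a black box from \cite{reimer} (see also \cite{bj}), but let me sketch how I would attack it. The reason it should hold is a \emph{splicing} observation: if $C\in\mathcal{X}\square\mathcal{Y}$, choose disjoint witnesses $V_X,V_Y\subseteq U$ with $\{D:D\cap V_X=C\cap V_X\}\subseteq\mathcal{X}$ and $\{D:D\cap V_Y=C\cap V_Y\}\subseteq\mathcal{Y}$; complementing the second cylinder gives $\{D:D\cap V_Y=(U\setminus C)\cap V_Y\}\subseteq\overline{\mathcal{Y}}$, so the set $C^{\ast}$ that agrees with $C$ on $V_X$, with $U\setminus C$ on $V_Y$, and (say) with $C$ elsewhere lies in $\mathcal{X}\cap\overline{\mathcal{Y}}$. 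Thus \emph{some} map $\mathcal{X}\square\mathcal{Y}\to\mathcal{X}\cap\overline{\mathcal{Y}}$ exists; the whole difficulty is that $V_X,V_Y$ are not canonical, so this map need not be injective, and the theorem asserts that a canonical, injective choice can be made.

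I would run the standard compression argument, by induction on $|U|$. Fix $x\in U$ and let $D_x$ be the down-compression at $x$ (replace each member $A$ by $A\setminus\{x\}$ whenever $A\setminus\{x\}$ is not already present); this preserves cardinalities, and iterating such compressions eventually terminates --- the total size of the members is non-increasing --- with $\mathcal{X}$ and $\mathcal{Y}$ both down-sets. So it suffices to prove two monotonicity facts, (i) $|D_x\mathcal{X}\square D_x\mathcal{Y}|\ge|\mathcal{X}\square\mathcal{Y}|$ and (ii) $|D_x\mathcal{X}\cap\overline{D_x\mathcal{Y}}|\le|\mathcal{X}\cap\overline{\mathcal{Y}}|$, and then the inequality for down-sets; chaining these across all the compressions gives $|\mathcal{X}\square\mathcal{Y}|\le|\mathcal{X}\cap\overline{\mathcal{Y}}|$ in general. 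Fact (ii) is routine: $\overline{D_x\mathcal{Y}}$ is the \emph{up}-compression of $\overline{\mathcal{Y}}$ at $x$, and splitting $\mathcal{X}$ and $\overline{\mathcal{Y}}$ along the $x$-coordinate into their two slices in $2^{U\setminus\{x\}}$ reduces (ii) to the elementary fact that down-compressing one family and up-compressing the other, in the same coordinate, does not increase the size of their intersection --- a short case check. Fact (i) is the analogous but more delicate monotonicity of disjoint occurrence under compression, in the spirit of the compression arguments used in \cite{bj}.

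It then remains to prove $|\mathcal{X}\square\mathcal{Y}|\le|\mathcal{X}\cap\overline{\mathcal{Y}}|$ when $\mathcal{X}$ and $\mathcal{Y}$ are down-sets. Here one has the transparent description that $C\in\mathcal{X}\square\mathcal{Y}$ iff there exist $X'\in\mathcal{X}$, $Y'\in\mathcal{Y}$ with $C\subseteq X'\cap Y'$ and $X'\cup Y'=U$, whereas $\mathcal{X}\cap\overline{\mathcal{Y}}=\{X'\in\mathcal{X}:U\setminus X'\in\mathcal{Y}\}$, and one still has to produce an explicit injection between the two sets --- again by induction on $|U|$: split off a coordinate, apply the inductive hypothesis to the combined slices $\mathcal{X}_0\cap\mathcal{X}_1,\ \mathcal{X}_0,\ \mathcal{X}_1$ and their three $\mathcal{Y}$-counterparts, and reassemble the partial injections. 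I expect this reassembly --- guaranteeing that the glued pieces do not collide, which is exactly the point where the witness sets get pinned down canonically --- to be the main obstacle; it is the genuinely clever part of Reimer's argument, and a paper on random matchings is better off citing Theorem \ref{rthm} than reproving it, which is the route taken here.
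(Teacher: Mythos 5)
The paper offers no proof of Theorem~\ref{rthm}: it is Reimer's theorem, imported verbatim from \cite{reimer}, and you are right that citing it is the appropriate course. Your first paragraph also correctly isolates the structure of the real argument: after enlarging the witnesses to a partition $V_X\sqcup V_Y=U$, the spliced set $C^{\ast}$ (agreeing with $C$ on $V_X$ and with $U\setminus C$ on $V_Y$) does lie in $\mathcal{X}\cap\overline{\mathcal{Y}}$, and the entire content of the theorem is that the map $C\mapsto C^{\ast}$ can be made injective. That is exactly how Reimer's published proof is organised; the injectivity is what his ``butterfly lemma'' about families of disjoint cylinders delivers.

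Your second and third paragraphs, however, should not be mistaken for a proof, nor for a sketch of the proof in \cite{reimer}. Two steps are genuinely missing. First, fact (i), the monotonicity $|D_x\mathcal{X}\square D_x\mathcal{Y}|\ge|\mathcal{X}\square\mathcal{Y}|$ under simultaneous down-compression, is asserted by analogy but never argued; it is not a known lemma, and there is a structural reason to be wary: membership in $\mathcal{X}\square\mathcal{Y}$ is witnessed by entire cylinders being contained in $\mathcal{X}$ and $\mathcal{Y}$, and compression moves individual sets without respecting cylinders, so the effect on $\square$ is not a ``short case check'' of the kind that works for (ii). Reimer's proof does not reduce to down-sets at all. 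Second, even granting the reduction, the down-set case is left entirely open (``I expect this reassembly\dots to be the main obstacle''), and that case is not appreciably easier than the general one --- your reformulation of $\mathcal{X}\square\mathcal{Y}$ for down-sets is correct, but the required injection onto $\{X'\in\mathcal{X}:U\setminus X'\in\mathcal{Y}\}$ is still the whole difficulty. Since both you and the paper ultimately rely on \cite{reimer} as a black box, nothing in the paper is affected; but as a standalone proof attempt the proposal has a gap precisely at the theorem's core.
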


Combining Theorem \ref{rthm} with Equations (\ref{line1}) and (\ref{line2}) and Lemma \ref{lemmatart}, we obtain that 
\begin{multline*}
|(\mathcal{M}_{\mathcal{A}\square \mathcal{B}}\times \mathcal{M})\cap X_i|=|(\mathcal{A}\square \mathcal{B})^i|\le|\mathcal{A}^i\square \mathcal{B}^i| \le|\mathcal{A}^i\cap \overline{\mathcal{B}^i}|=|(\mathcal{M}_{\mathcal{A}}\times  \mathcal{M}_\mathcal{B})\cap X_i|.
\end{multline*}
This proves Inequality (\ref{partineq}).







\subsection{The proof of Theorem \ref{thm2}}

Consider an $i\in I$. Observe that $w(C)\cdot w(D)$ is the same for any $(C,D)\in X_i$. Thus, it is again enough to prove Inequality (\ref{partineq}), so the whole proof goes through.

\subsection{The proof of Theorem \ref{thm3}}  

We define 
\[\mathcal{M}'=\{M\in \mathcal{M}|\quad V_+\subseteq B(M), V_-\cap B(M)=\emptyset\}.\]
Recall that for $i=(W,K,L,R)\in I$, we defined $H_i$ as the endpoints of the paths in~$R$, and $B_i$ as $B_i=((W\cup V(R))\Delta S)\backslash H_i$. Now we define
\[I'=\{i\in I|\quad V_+\subseteq B_i, V_-\cap(B_i\cup H_i)=\emptyset\}.\]

Using the following lemma, the proof of Theorem \ref{thm1} can be repeated again.
\begin{lemma}
The sets $(X_i)_{i\in I'}$ give a partition of $\mathcal{M}'\times \mathcal{M}'$.
\end{lemma}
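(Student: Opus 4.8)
The plan is to deduce this from Lemma~\ref{lemma1}. Since $(X_i)_{i\in I}$ already partitions $\mathcal{M}\times\mathcal{M}$, which contains $\mathcal{M}'\times\mathcal{M}'$, it suffices to prove two things: first, that $X_i\subseteq\mathcal{M}'\times\mathcal{M}'$ for every $i\in I'$; second, that $X_i\cap(\mathcal{M}'\times\mathcal{M}')=\emptyset$ for every $i\in I\setminus I'$. Together these say that the blocks $X_i$ with $i\in I'$ are exactly the blocks of the partition $(X_i)_{i\in I}$ that lie inside $\mathcal{M}'\times\mathcal{M}'$, while the remaining blocks miss $\mathcal{M}'\times\mathcal{M}'$ entirely; hence $(X_i)_{i\in I'}$ partitions $\mathcal{M}'\times\mathcal{M}'$.

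The one ingredient needed is a comparison between $B_i,H_i$ and the sets $B(M)$ for the matchings $M$ occurring at index $i$. From the displayed formulas $B(C_{i,\omega})=B_i\cup\{v_{j,\omega_j}:j\}$ and $B(D_{i,\omega})=B_i\cup\{v_{j,1-\omega_j}:j\}$, together with $H_i=\{v_{j,0},v_{j,1}:j\}$, every $M\in Y_i^C\cup Y_i^D$ satisfies $B_i\subseteq B(M)\subseteq B_i\cup H_i$. Furthermore, for a pair $(C,D)=(C_{i,\omega},D_{i,\omega})\in X_i$ one has $B(C)\cap B(D)=B_i$ and $B(C)\cup B(D)=B_i\cup H_i$; the first identity uses that $B_i$ is disjoint from $H_i$ and that $v_{j,\omega_j}\neq v_{j,1-\omega_j}$ together with the vertex-disjointness of the paths $P_1,\dots,P_k$, and the second is immediate.

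Granting this, both claims are short. If $i\in I'$, then $V_+\subseteq B_i$ and $V_-\cap(B_i\cup H_i)=\emptyset$, so every $M\in Y_i^C\cup Y_i^D$ has $V_+\subseteq B_i\subseteq B(M)$ and $B(M)\cap V_-\subseteq(B_i\cup H_i)\cap V_-=\emptyset$, that is, $M\in\mathcal{M}'$; as $X_i\subseteq Y_i^C\times Y_i^D$ this gives $X_i\subseteq\mathcal{M}'\times\mathcal{M}'$. Conversely, if $i\notin I'$, pick any $(C,D)\in X_i$. Either $V_+\not\subseteq B_i=B(C)\cap B(D)$, in which case $V_+$ is not contained in at least one of $B(C),B(D)$, or $V_-\cap(B_i\cup H_i)=V_-\cap(B(C)\cup B(D))\neq\emptyset$, in which case $V_-$ meets at least one of $B(C),B(D)$; either way $(C,D)\notin\mathcal{M}'\times\mathcal{M}'$, so $X_i\cap(\mathcal{M}'\times\mathcal{M}')=\emptyset$. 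I expect no real obstacle here; the only point requiring care is the verification of the two set identities on $X_i$, which rests entirely on the disjointness of $B_i$ and $H_i$ and on the paths in $R$ being vertex-disjoint.
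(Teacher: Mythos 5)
Your proof is correct. The paper itself disposes of this lemma by saying the argument of Lemma~\ref{lemma1} can be repeated: i.e., given $(C,D)\in\mathcal{M}'\times\mathcal{M}'$, form the cycle/path decomposition of $C\cup D$, read off the unique index $i$, and check that this $i$ lands in $I'$. You instead use Lemma~\ref{lemma1} as a black box and show that the blocks of the existing partition split cleanly: $X_i\subseteq\mathcal{M}'\times\mathcal{M}'$ for $i\in I'$ and $X_i\cap(\mathcal{M}'\times\mathcal{M}')=\emptyset$ for $i\notin I'$. The two key identities you isolate, $B(C)\cap B(D)=B_i$ and $B(C)\cup B(D)=B_i\cup H_i$ for $(C,D)\in X_i$, follow exactly as you say from the displayed formulas for $B(C_{i,\omega})$ and $B(D_{i,\omega})$, the disjointness of $B_i$ and $H_i$ (built into the definition of $B_i$), and the vertex-disjointness of the paths. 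Your route is arguably the cleaner one: it makes explicit \emph{why} the definition of $I'$ is the right one (it is precisely the condition under which a whole block $X_i$ lies in $\mathcal{M}'\times\mathcal{M}'$, which is also what is needed later so that the counting argument restricted to $I'$ goes through), whereas rerunning the Lemma~\ref{lemma1} argument would only directly give that each pair in $\mathcal{M}'\times\mathcal{M}'$ has its index in $I'$, leaving the containment $X_i\subseteq\mathcal{M}'\times\mathcal{M}'$ to be checked separately anyway. No gaps.
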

\begin{proof}
The proof is almost identical to that of Lemma \ref{lemma1}.
\end{proof}
\subsection{The proof Corollary \ref{corsubm}}

 Let $X_0=X\backslash Y$ and $Y_0=Y\backslash X$. Clearly the events $X_0\subseteq B(M)$ and $Y_0\subseteq B(M)$ depend on disjoint sets. Theorem \ref{thm3} gives us
\begin{multline*}
\mathbb{P}(X_0\subseteq B(M)|X\cap Y\subseteq B(M)) \mathbb{P}(Y_0\subseteq B(M)|X\cap Y\subseteq B(M)) \\\ge \mathbb{P}(X_0\subseteq B(M),Y_0\subseteq B(M)|X\cap Y\subseteq B(M)),
\end{multline*}
and this is equivalent with the statement of the corollary.    

\subsection{The proof Theorem \ref{thm4}}

Let $t>0$, and set all the edge weights to be equal to~$t$. Let $M_t$ be the corresponding random matching. By Theorem \ref{thm2}, if $\mathcal{A}$ and $\mathcal{B}$ are increasing events, then
\[\mathbb{P}(B(M_t)\in \mathcal{A}\square \mathcal{B} )\le \mathbb{P}(B(M_t)\in\mathcal{A})\mathbb{P}(B(M_t)\in\mathcal{B}).\]
Observe that 
\begin{align*} \lim_{t\to\infty}\mathbb{P}(B(M_t)\in \mathcal{A})&=\mathbb{P}(B(M)\in \mathcal{A}),\quad \lim_{t\to\infty}\mathbb{P}(B(M_t)\in \mathcal{B})=\mathbb{P}(B(M)\in \mathcal{B})\\&\text{ and }\lim_{t\to\infty}\mathbb{P}(B(M_t)\in \mathcal{A}\square \mathcal{B})=\mathbb{P}(B(M)\in \mathcal{A}\square \mathcal{B}).
\end{align*}
Thus, the statement follows.

\bibliography{bkref}
\bibliographystyle{plain}

\end{document}